\theoremstyle{plain}
\newtheorem{theorem}{Theorem}[section]
\newtheorem{lemma}[theorem]{Lemma}
\newtheorem{proposition}[theorem]{Proposition}
\newtheorem{corollary}[theorem]{Corollary}
\newtheorem{Counter-example}[theorem]{Counter-example}
\newtheorem{remark}[theorem]{Remark}
\theoremstyle{definition}
\theoremstyle{remark}
\def\R{\mathbb R}
\def\S{\mathbb S}
\def\L{\mathbb L}
\def\E{\mathbb E}
\begin{document}

\title{Compact spacelike surfaces in four-dimensional Lorentz-Minkowski
  spacetime with a non-degenerate \\ lightlike normal direction}

\author{Francisco J. Palomo\footnote{Partially supported by the Spanish MEC-FEDER Grant MTM2010-18099
and the Junta de Andaluc\'{\i}a Regional Grant P09-FQM-4496 with FEDER
funds.}\,\,  and Francisco J. Rodr\'{\i}guez \\ Dep. Matem\'{a}tica Aplicada,
Universidad de M\'{a}laga \\ 29071-M\'{a}laga (Spain)\\
{\ttfamily fjpalomo@ctima.uma.es, fjrodri@ctima.uma.es}
\and
Alfonso Romero$^*$  \\ Dep. Geometr\'{\i}a y Topolog\'{\i}a,
Universidad de Granada \\ 18071-Granada (Spain) \\ {\ttfamily
aromero@ugr.es}
}

\date{ }

\maketitle

\thispagestyle{empty}

\vspace*{-6mm}
\begin{abstract}
A spacelike surface in four-dimensional Lorentz-Minkowski spacetime
through the lightcone has a meaningful lightlike normal vector field
$\eta$. Several sufficient assumptions on such a surface with
non-degenerate $\eta$-second  fundamental form are established to prove
that it must be a totally umbilical round sphere. With this aim, a new
formula which relates the Gauss curvatures of the induced metric and of
the $\eta$-second fundamental form is developed. Then, totally umbilical
round spheres are characterized as the only compact spacelike surfaces
through the lightcone such that its $\eta$-second fundamental form is
non-degenerate and has constant Gauss curvature two.  Another
characterizations of totally umbilical round spheres in terms of the
Gauss-Kronecker curvature of $\eta$ and the area of the $\eta$-second
fundamental form are also given.
\end{abstract}

\hyphenation{me-tric}

\section{Introduction}
The geometry of spacelike surfaces in $4$-dimensional
Lorentz-Minkowski spacetime $\L^4$ through the future lightcone
$\Lambda$ is very rich and appealing. In fact, any $2$-dimensional
simply-connected Riemannian manifold may be isometrically immersed
into $\Lambda$, \cite{LUY}, \cite{PR}. In particular, any
Riemannian metric on the sphere $\S^2$ may be realized as the
induced metric of a spacelike immersion of $\S^2$ in $\L^4$
through $\Lambda$. This situation is quite different when
$\Lambda$ is replaced by a (non-degenerate) hypersurface of
$\L^4$. For instance, there is no spacelike immersion of $\S^2$ in
the unit De Sitter spacetime $\S_1^3 \subset \L^4$ such that the
Gauss curvature of the induced metric satisfies $K>1$, \cite[Cor.
10]{AR}. On the other hand, the existence of an isometric
immersion of an $n(\geq 3)$-dimensional Riemannian manifold in
$\L^{n+2}$ through the corresponding future lightcone has a clear
geometric meaning; namely, an $n(\geq 3)$-dimensional
simply-connected Riemannian manifold $M^n$ admits an isometric
immersion in $\L^{n+2}$ through the lightcone if and only if $M^n$
is conformally flat \cite{AD}, which is a nice characterization of
conformally flatness in terms of Lorentzian geometry.

Motivated in part by these results, spacelike surfaces through the
lightcone in $\L^4$ have been studied from different viewpoints,
\cite{Liu}, \cite{LiuJung}, \cite{LUY}. Focusing our approach
here, if $\psi : M^2 \longrightarrow \L^4$ is a spacelike
immersion such that $\psi(M^2) \subset \Lambda$, the position
vector field $\psi$ is clearly normal and lightlike. The
corresponding Weingarten operator satisfies $A_{\psi}=-I$, where
$I$ denotes the identity transformation, and, therefore, it
provides no information on the extrinsic geometry of $M^2$.
However, there is another lightlike normal vector field $\eta$,
uniquely defined by $\langle \eta, \eta \rangle = 0$ and $\langle
\psi, \eta \rangle = 1$. The Weingarten operator $A_{\eta}$ is
closely related to both intrinsic and extrinsic geometry of $M^2$
by the equations
\[
\mathrm{tr}(A_{\eta})=-K=-\langle \mathbf{H},\mathbf{H} \rangle,
\]
where $K$ is the Gauss curvature of the induce metric and
$\mathbf{H}$ the mean curvature vector field of $\psi$ (Section
2). Moreover, the lightlike normal vector fields $\psi$ and $\eta$
are connected to the so-called $\S^2$-valued Gauss maps
$\mathcal{G}^F$ and $\mathcal{G}^P$, introduced in a more general
context in \cite{Ko}, by
\[
\mathcal{G}^F=\frac{1}{\psi_0}\,\psi \quad \mathrm{and}  \quad
\mathcal{G}^P=-\frac{1}{\eta_0}\,\eta,
\]
where $\psi_0$ and $\eta_0$ are the time coordinates of $\psi$ and
$\eta$, respectively.

\vspace{1mm}

In that follows, given a spacelike surface $M^2$ in $\L^4$ through
$\Lambda$, we will say that $\eta$ is non-degenerate if the
$\eta$-second fundamental form, $\mathrm{II}_{\eta}$, is a
non-degenerate metric on $M^2$. The assumption $\eta$ is
non-degenerate has the following geometric meanings. On the one
hand, $\eta$ is non-degenerate if and only if the Gauss map
$\mathcal{G}^{P}$ is a local diffeomorphism from $M^2$ to
$\S^{2}$. On the other hand, $\eta$ is non-degenerate if and only
if $\widetilde{\psi}:=-\eta$ is also a spacelike immersion (Lemma
\ref{IIII}). In this case, $\widetilde{\psi}$ is said to be the
conjugate immersion to $\psi$. It is remarkable that given a
non-degenerate spacelike surface through $\Lambda$, it is totally
umbilical if and only if its conjugated surface is also totally
umbilical (Corollary \ref{conj}).

\vspace{1mm}

A compact spacelike surface $M^2$ in $\Lambda$ must be
topologically a sphere $\S^2$, \cite{PR}. It is then natural to
wonder for some additional assumption in order to conclude that
$M^2$ is a totally umbilical round sphere. Recall that all the
totally umbilical compact spacelike immersions of $\S^2$ in $\L^4$
through $\Lambda$ were explicitly constructed in \cite{PR} as
follows. If $\psi$ is such a immersion, there exist $u\in \L^{4}$,
$\langle u,u\rangle=-1$, with $u_{0}<0$ and $r>0$, such that,
$$
\psi(\S^{2})=\S^{2}(u,r):=\{\,x\in \L^{4}\; : \; \langle
x,x\rangle=0,\,\,\langle u,x\rangle=r\,\}.
$$
In this case, $A_{\eta}=-(1/2r^{2})I$ and the Riemannian metric on
$M^2$ given by $\mathrm{II}_{\eta}(X,Y):=-\langle
A_{\eta}(X),Y\rangle$ has constant Gauss curvature $K^{\eta}=2$,
\cite{PR}. As shown in \cite[Theor. 5.4]{PR} a compact spacelike
surface $M^{2}$ in $\Lambda$ with constant Gauss curvature is a
totally umbilical round sphere in $\L^{4}$. This result gives an
answer to the previous question from an intrinsic point of view.
In the same philosophy of \cite{AAR}, \cite{Stef} and \cite{AR},
the maim aim of this paper is to obtain several extrinsic
characterizations of the totally umbilical spacelike spheres in
$\L^{4}$ among all the compact spacelike surfaces in $\L^4$ which
factors through $\Lambda$.

\vspace{1mm}

When a spacelike surface $M^2$ in $\Lambda$ is compact, the
non-degeneracy of $\eta$ implies that $\mathrm{II}_{\eta}$ is in
fact Riemannian (Proposition \ref{com}). Our main goal here is
(Theorem \ref{des4}),
\begin{quote}
{\it For a compact spacelike surface $M^2$ of $\L^{4}$ through
$\Lambda$ with $\eta$ non-degenerate, the following assertions are
equivalent:
\begin{enumerate}
\item[{\rm 1.}] $M^2$ is a totally umbilical round sphere,
\item[{\rm 2.}] The Gauss-Kronecker curvature $\mathfrak{d}=\mathrm{det}(A_{\eta})$ is constant,
\item[{\rm 3.}] The Gauss curvature of the Riemannian metric $\mathrm{II}_{\eta}$ satisfies $K^{\eta}=2$.
\end{enumerate}
Moreover, each one of these assumptions is equivalent to the
constancy of the Gauss curvature of the induced metric on $M^2$
{\rm \cite[Theor. 5.4]{PR}}.}
\end{quote}

In order to prove this result, our main tool will be a new formula
which, for any spacelike surface of $\L^4$ through $\Lambda$ with
non-degenerate $\eta$, relates the Gauss curvature $K$ of the
induced metric, the Gauss curvature $K^{\eta}$ of
$\mathrm{II}_{\eta}$ and the Gauss-Kronecker curvature
$\mathfrak{d}$ (Theorem \ref{nuevo}). Note that this extrinsic
quantity is closely related to the notion of quartic curvature
$\mathcal{H}$ of the spacelike surface \cite{Ko}. In fact, it is
not difficult to see that $\mathcal{H}=2\mathfrak{d}$.

\vspace{1mm}

The paper ends with the statement of two equivalent conditions
each one equivalent to each of the three previously stated
(Propositions \ref{des20}, \ref{ultima}):
\begin{quote}
{\it Each of the three equivalent assertions above is equivalent
to,

{\rm 4}. The $\mathrm{II}_{\eta}$-area of $M^2$ satisfies,
$\mathrm{area}(M^2,\mathrm{II}_{\eta})=2\pi$, or

{\rm 5}. The first non-trivial eigenvalue, $\lambda_1$, of the
Laplacian of the induced metric on $M^2$ satisfies,
$$
\lambda_{1}= 2\,\frac{\int_{M^2}\langle \mathbf{H}, \mathbf{H}\rangle \,dA}{\mathrm{area}(M^2,\langle\,,\,\rangle)}\,.
$$}
\end{quote}

\hyphenation{Lo-rent-zi-an}

\section{Preliminaries}
Let $\L^4$ be the Lorentz-Minkowski spacetime, that is, $\R^4$
endowed with the Lorentzian metric, $$\langle \,\, ,\,\, \rangle =
-(dx_{0})^{2}+(dx_{1})^{2}+(dx_{2})^{2}+(dx_{3})^{2},$$ where
$(x_{0},x_{1},x_{2}, x_{3})$ are the canonical coordinates of
$\R^{4}$. A smooth immersion $\psi:M^{2}\rightarrow \L^4$ of a
$2$-dimensional (connected) manifold $M^2$ is said to be a
spacelike if the induced metric via $\psi$ (denoted also by
$\langle\, ,\, \rangle$) is a Riemannian metric on $M^2$.

\vspace{1mm}

Let $\nabla$ and $\overline{\nabla}$ be the Levi-Civita
connections of $M^2$ and $\L^4$, respectively, and let
$\nabla^{\perp}$ be the normal connection. The Gauss
and Weingarten formulas are,
$$\overline{\nabla}_X Y=\psi_{*}(\nabla_XY) + \mathrm{II}(X,Y)
\, \quad \mathrm{and} \, \quad
\overline{\nabla}_X N=-\psi_{*}(A_{N}X)+\nabla^{\perp}_X\,N,$$
for any $X,Y \in \mathfrak{X}(M^{2})$ and $N \in
\mathfrak{X}^{\perp}(M^{2})$, where $\mathrm{II}$ denotes the
second fundamental form of $\psi$. The shape (or Weingarten) operator $A_N$
corresponding to $N$ is related to $\mathrm{II}$ by,
$$\langle A_{N}X, Y \rangle = \langle \mathrm{II}(X,Y), N
\rangle,$$ for all $X,Y \in \mathfrak{X}(M^{2})$. The mean
curvature vector field of $\psi$ is given by
$\mathbf{H}=\frac{1}{2}\mathrm{tr}_{\langle\,\,,\,\,\rangle}\mathrm{II}$.
For each $N\in \mathfrak{X}^{\perp}(M^{2})$, the Codazzi equation
gives,
\begin{equation}\label{Cod}
(\nabla_{X}A_{N})Y-(\nabla_{Y}A_{N})X=
A_{\nabla^{\perp}_{X}N}Y - A_{\nabla^{\perp}_{Y}N}X.
\end{equation}
We denote by $\mathrm{II}_{N}$ the symmetric tensor field on $M^2$
defined by,
$$\mathrm{II}_{N}(X,Y)=-\langle A_{N}X, Y \rangle.$$
We will call $\mathrm{det}(A_N)$ the Gauss-Kronecker curvature of
$M^2$ with respect to the normal vector field $N$. The normal
vector field $N$ is said to be non-degenerate whenever
$\mathrm{det}(A_N)\neq 0 $ at every point $p\in M^{2}$,
\cite{Chen}. When $N$ is non-degenerate, $\mathrm{II}_{N}$ is a
semi-Riemannian metric on $M^2$.

\vspace{1mm}

We write, $$\Lambda = \{\,v\in \L^{4}\,:\,\langle v,v \rangle
=0,\, v_{0}>0\,\},$$ for the future lightcone of $\L ^4$. A
spacelike surface $\psi:M^{2}\rightarrow \L^4$ factors through the
lightcone if $\psi(M^{2})\subset \Lambda$. Every spacelike surface
in $\L^4$ which factors through the lightcone must be orientable
\cite[Lemma 3.2]{PR}. Therefore, we can globally take a lightlike
vector field $\eta\in \mathfrak{X}^{\perp}(M^{2})$ with $\langle
\psi,\eta\rangle=1$.

\vspace{1mm}

From now on, unless otherwise was stated, we will assume
$\psi:M^{2}\to \L^{4}$ is a spacelike surface which factors
through the lightcone. Recall briefly several local geometric
properties of such a surface. Proofs for these features can be
found in \cite{PR}. First, the lightlike normal vector fields
$\psi$ and $\eta$ are parallel with respect to the normal
connection. The corresponding Weingarten operators are given by,
\begin{equation}\label{formulo}
A_{\psi}=-I, \qquad A_{\eta}= -\frac{1+\| \nabla \psi_{0}\|^2
}{2\psi_{0}^{2}}I+\frac{1}{\psi_{0}}\,\nabla^{2}
\psi_{0},
\end{equation}
where $\nabla^{2} \psi_{0} (v)=\nabla_{v}(\nabla \psi_{0})$ for
every $v\in T_pM^{2}$, $p\in M^2$ and  we have written $\psi_{0}$
for $x_{0}\circ \psi$. Recall that the Gauss curvature for the
induced metric on $M^2$ satisfies,
\begin{equation}\label{traza}
K=-\mathrm{tr}(A_{\eta})=\langle \mathbf{H},\mathbf{H}\rangle,
\end{equation}
and therefore, the second fundamental form satisfies,
$$\langle \mathrm{II},\mathrm{II} \rangle(p)=\sum_{i,j=1}^2\langle \mathrm{II}(e_{i},e_{j}),\mathrm{II}(e_{i},e_{j})\rangle=2K(p),$$ where $\{e_{1},e_{2}\}$ is an orthonormal basis of $T_{p}M^2$, $p\in M^2$.

\vspace{1mm}

We write $\mathfrak{d}=\mathrm{det}(A_{\eta})$ for the
Gauss-Kronecker curvature with respect to $\eta$. From formula (\ref{traza}), we arrive to the following technical
result which will be useful along this paper.

\begin{lemma}\label{des2}
Let $\psi : M^2 \longrightarrow \L^{\,4}$ be a spacelike immersion
which factors through the lightcone $\Lambda$. Then,
\begin{equation}\label{des50}
4\,\mathfrak{d}\leq K^{2}\leq 2\, \mathrm{tr}(A_{\eta}^{2}),
\end{equation}
and one equality holds (if and only if the other one also holds)
if and only if $M^{2}$ is totally umbilical.
\end{lemma}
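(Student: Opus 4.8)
The plan is to reduce both inequalities in (\ref{des50}) to a single elementary fact about the eigenvalues of $A_{\eta}$. First I would observe that since the induced metric on $M^2$ is Riemannian and $A_{\eta}$ is self-adjoint with respect to it — the relation $\langle A_{\eta}X,Y\rangle=\langle \mathrm{II}(X,Y),\eta\rangle$ is symmetric in $X$ and $Y$ because $\mathrm{II}$ is — at each point $p\in M^2$ the operator $A_{\eta}$ is diagonalizable with real eigenvalues $\lambda_1,\lambda_2$. In these terms, by (\ref{traza}) we have $K=-\mathrm{tr}(A_{\eta})=-(\lambda_1+\lambda_2)$, while $\mathfrak{d}=\det(A_{\eta})=\lambda_1\lambda_2$ and $\mathrm{tr}(A_{\eta}^2)=\lambda_1^2+\lambda_2^2$.

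The key step is the Cayley--Hamilton identity for the $2\times 2$ operator $A_{\eta}$, namely
\[
\mathrm{tr}(A_{\eta}^2)=\mathrm{tr}(A_{\eta})^2-2\det(A_{\eta})=K^2-2\mathfrak{d}.
\]
Substituting this into the right-hand inequality $K^2\leq 2\,\mathrm{tr}(A_{\eta}^2)$ turns it into $K^2\leq 2K^2-4\mathfrak{d}$, which is exactly the left-hand inequality $4\mathfrak{d}\leq K^2$. Thus the two inequalities are equivalent to one another, and this equivalence immediately settles the clause that one equality holds if and only if the other does.

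It then remains only to prove the single inequality $4\mathfrak{d}\leq K^2$. In terms of the eigenvalues this reads $4\lambda_1\lambda_2\leq(\lambda_1+\lambda_2)^2$, that is $0\leq(\lambda_1-\lambda_2)^2$, which is immediate, with equality precisely when $\lambda_1=\lambda_2$. Finally I would read off the equality case geometrically: $\lambda_1=\lambda_2$ means $A_{\eta}$ is a multiple of the identity, and since $A_{\psi}=-I$ is already a multiple of the identity by (\ref{formulo}), the surface is totally umbilical exactly when $A_{\eta}$ is proportional to $I$ as well. Hence equality in (\ref{des50}) characterizes total umbilicity. There is essentially no obstacle beyond being careful that total umbilicity concerns both normal directions and that $A_{\psi}$ contributes nothing new; the entire content of the lemma is the inequality $(\lambda_1-\lambda_2)^2\geq 0$ repackaged through the trace identity.
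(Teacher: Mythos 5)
Your proof is correct and follows essentially the same route the paper intends: the lemma is stated there without proof as an immediate consequence of (\ref{traza}), the implicit argument being exactly your pointwise eigenvalue computation for the self-adjoint operator $A_{\eta}$, together with the fact that, since $A_{\psi}=-I$, total umbilicity reduces to $A_{\eta}$ being proportional to the identity. Your use of the Cayley--Hamilton identity $\mathrm{tr}(A_{\eta}^{2})=K^{2}-2\mathfrak{d}$, which shows the two gaps $K^{2}-4\mathfrak{d}$ and $2\,\mathrm{tr}(A_{\eta}^{2})-K^{2}$ are literally equal, is a clean way to dispatch the ``one equality holds iff the other does'' clause, but it is the same elementary content as proving both inequalities from $(\lambda_{1}-\lambda_{2})^{2}\geq 0$.
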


\begin{remark}
{\rm Note that if $M^2$ is assumed to be compact, there exists
$p_{0}\in M^2$ such that equalities (\ref{des50}) hold at $p_0$.
In fact, otherwise we can define two smooth functions $f_1$ and
$f_2$ on $M^2$ with $f_1 < f_2$ and $\{f_{1}(p),f_{2}(p)\}$ are
the eigenvalues of $A_{\eta}$ at every point $p\in M^2$. Each one
of the eigendirections provides a $1$-dimensional distribution on
$M^2$. Since $M^2$ must be a topological sphere $\S^2$, this is
not possible.

On the contrary, the situation for noncompact complete spacelike
surfaces is completely different. Consider the following isometric
immersion $\psi$ of the Euclidean plane $\E^2$ in $\L^4$ through
the lightcone, $$\psi(x,y)=(\cosh x,\sinh x, \cos y,\sin y),$$
$(x,y)\in \E^2$. The lightlike normal vector field $\eta$ is given by
$\eta(x,y)=\frac{1}{2}(-\cosh x,-\sinh x, \cos y,\sin y)$. A
direct computation shows
$A_{\eta}(\partial_{x})=-(1/2)\,\partial_{x}$ and
$A_{\eta}(\partial_{y})=(1/2)\,\partial_{y}$. Therefore,
$\mathfrak{d}=-1/4$, $K=0$ and $2\, \mathrm{tr}(A_{\eta}^{2})=1$.}
\end{remark}

As a direct consequence of (\ref{formulo}) we get.

\begin{proposition}\label{com}
Let $\psi : M^2 \to \L^{\,4}$ be a spacelike immersion which
factors through the lightcone $\Lambda$. If the function
$\psi_{0}$ attains a maximum value at $p_{0}\in M^2$, then
$\mathrm{II}_{\eta}$ is positive definite in a neighborhood of
$p_{0}$. In particular, if $M^2$ is compact and $\eta$ is
non-degenerate, $\mathrm{II}_{\eta}$ is a Riemannian metric.
\end{proposition}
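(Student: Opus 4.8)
The plan is to read the local behavior of $A_{\eta}$ at a maximum of $\psi_{0}$ directly from the explicit expression (\ref{formulo}), and then to promote positive definiteness of $\mathrm{II}_{\eta}$ from a neighborhood to all of $M^2$ by a connectedness argument.

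First I would localize at the maximum. Let $p_0$ be a point where $\psi_0$ attains a maximum. Then $\nabla\psi_0(p_0)=0$ (critical point) and $\nabla^2\psi_0(p_0)$ is negative semi-definite (maximum), while $\psi_0>0$ everywhere since $\psi$ factors through $\Lambda$. Substituting into (\ref{formulo}), $A_\eta(p_0)=-\frac{1}{2\psi_0(p_0)^2}I+\frac{1}{\psi_0(p_0)}\nabla^2\psi_0(p_0)$ is the sum of a negative-definite operator and a negative-semi-definite one, hence negative definite. Therefore $\mathrm{II}_\eta(X,X)=-\langle A_\eta X,X\rangle>0$ for every nonzero $X\in T_{p_0}M^2$, so $\mathrm{II}_\eta$ is positive definite at $p_0$ and, by continuity of the tensor involved, on a whole neighborhood of $p_0$. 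This settles the first assertion.

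For the second assertion, compactness ensures that the continuous function $\psi_0$ attains a maximum at some $p_0$, so $\mathrm{II}_\eta$ is positive definite near $p_0$ by the previous paragraph. To globalize, I would use that non-degeneracy of $\eta$ makes $\mathrm{II}_\eta$ a nowhere-degenerate symmetric form on the connected surface $M^2$, so its signature is locally constant: a continuous non-degenerate symmetric bilinear form cannot pass from one signature to another without becoming degenerate at some point. Concretely, $\det\mathrm{II}_\eta=\det A_\eta=\mathfrak{d}$ is continuous and nowhere zero, hence of constant sign, and it is positive near $p_0$ (where the form is definite), so $\mathfrak{d}>0$ and $\mathrm{II}_\eta$ is definite at every point. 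Finally, $\mathrm{tr}_{\langle\,,\,\rangle}\mathrm{II}_\eta=-\mathrm{tr}(A_\eta)=K$ by (\ref{traza}), and the definite form $\mathrm{II}_\eta$ is positive definite exactly where $K>0$ and negative definite exactly where $K<0$; as $K$ is then continuous and nonvanishing, connectedness of $M^2$ forces a single sign, which is positive because of the behavior near $p_0$. Thus $\mathrm{II}_\eta$ is Riemannian on all of $M^2$. The main obstacle is precisely this globalization step: the computation at the maximum is immediate, whereas ruling out a change of definiteness away from $p_0$ is what genuinely requires both the non-degeneracy of $\eta$ and the connectedness of $M^2$.
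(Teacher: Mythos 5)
Your proof is correct and follows exactly the route the paper intends: the paper states the proposition as a direct consequence of formula (\ref{formulo}), and your argument simply fills in the details (gradient vanishing and negative semi-definite Hessian at the maximum make $A_{\eta}$ negative definite there, and then non-degeneracy plus connectedness propagate the positive definiteness of $\mathrm{II}_{\eta}$ over all of $M^2$). Your globalization step via the sign of $\mathfrak{d}$ and the trace identity $\mathrm{tr}_{\langle\,,\,\rangle}\mathrm{II}_{\eta}=K$ is a clean and valid way to make the locally-constant-signature argument explicit.
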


\begin{remark}\label{curpositiva}
{\rm (a) If $\mathrm{II}_{\eta}$ is a Riemannian metric on $M^2$,
we get from (\ref{traza}) that $K>0$ on all $M^2$. (b) For a
noncompact complete spacelike surface we can have even
$A_{\eta}\equiv 0$. In fact, consider the isometric immersion
$\phi$ of the Euclidean plane $\E^{2}$ in $\L^{4}$ given by,
$$\phi(x,y)=\Big(\frac{x^2+y^2+1}{2},\frac{x^2+y^2-1}{2},x,y\Big).$$
Clearly $\phi(\E^2)\subset \Lambda$, $\eta(x,y)=(-1,-1,0,0)$ and
therefore $A_{\eta}=0$ at every point $(x,y)\in \E^2$. }
\end{remark}

For every spacelike immersion $\psi : M^2 \to \L^{\,4}$ which
factors through the lightcone $\Lambda$, we consider the smooth
map $\widetilde{\psi}:M^{2}\to \Lambda$ given by $
\widetilde{\psi}=-\eta. $ In general, $\widetilde{\psi}$ fails to
be an immersion (see previous Remark). In fact, for every $v\in
T_{p}M^2$, we have that
$\widetilde{\psi}_{*}(v)=-\overline{\nabla}_{v}\eta=\psi_{*}(A_{\eta}(v))$.
Hence we get the following result.

\begin{lemma}\label{IIII}
Let $\psi : M^2 \to \L^{\,4}$ be a spacelike immersion which
factors through the lightcone $\Lambda$. Then, $\widetilde{\psi}$
is an immersion if and only if $\eta$ is non-degenerate. In this
case, the induced metric from $\widetilde{\psi}$ is Riemannian and
agrees with the third fundamental form corresponding to $\eta$,
that is,
$$
\widetilde{\psi}^{*}\langle u,v\rangle=\langle A_{\eta}^2(u), v \rangle,
$$
for every $u,v \in T_{p}M^2$, $p\in M^2$.
\end{lemma}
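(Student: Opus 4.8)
The plan is to reduce everything to pointwise linear algebra on each tangent space $T_pM^2$, exploiting the differential formula $\widetilde{\psi}_{*}(v)=\psi_{*}(A_{\eta}(v))$ already established just above the statement, together with the facts that $\psi$ is a spacelike isometric immersion and that $A_{\eta}$ is self-adjoint for the induced metric. First I would prove the equivalence. Since $\psi$ is an immersion, $\psi_{*}$ is injective at every point, so $\widetilde{\psi}_{*}(v)=\psi_{*}(A_{\eta}(v))$ shows that $\ker\widetilde{\psi}_{*,p}=\ker A_{\eta,p}$ for each $p\in M^2$. Hence $\widetilde{\psi}$ is an immersion exactly when $A_{\eta}$ has trivial kernel at every point; on the $2$-dimensional space $T_pM^2$ this is equivalent to $\mathrm{det}(A_{\eta})(p)\neq 0$, that is, to $\eta$ being non-degenerate in the sense of Section~2.

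Assuming $\eta$ non-degenerate, I would then compute the pulled-back metric. For $u,v\in T_pM^2$,
\[
\widetilde{\psi}^{*}\langle u,v\rangle=\langle\widetilde{\psi}_{*}(u),\widetilde{\psi}_{*}(v)\rangle=\langle\psi_{*}(A_{\eta}u),\psi_{*}(A_{\eta}v)\rangle=\langle A_{\eta}u,A_{\eta}v\rangle,
\]
where the last equality uses that $\psi$ is isometric, i.e.\ $\psi_{*}$ preserves the induced Riemannian metric. Now $A_{\eta}$ is self-adjoint with respect to $\langle\,,\,\rangle$, because $\langle A_{\eta}X,Y\rangle=\langle\mathrm{II}(X,Y),\eta\rangle$ is symmetric in $X,Y$; applying self-adjointness once yields $\langle A_{\eta}u,A_{\eta}v\rangle=\langle A_{\eta}^{2}u,v\rangle$, which is precisely the third fundamental form associated with $\eta$, as claimed.

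Finally, to see that this form is Riemannian I would observe that $\widetilde{\psi}^{*}\langle u,u\rangle=\langle A_{\eta}u,A_{\eta}u\rangle=\|A_{\eta}u\|^{2}\geq 0$, with equality only if $A_{\eta}u=0$, which forces $u=0$ since $A_{\eta}$ is a linear isomorphism of $T_pM^2$ under the non-degeneracy hypothesis. Thus the form is positive definite. I do not expect a genuine obstacle: once the differential formula and the self-adjointness of $A_{\eta}$ are in hand the argument is routine. The only point requiring care is to read the non-degeneracy condition pointwise, so that the immersion property (which must hold at \emph{every} point) matches $\mathrm{det}(A_{\eta})\neq 0$ throughout $M^2$ rather than merely generically.
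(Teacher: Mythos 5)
Your proposal is correct and takes essentially the same route as the paper: the paper's entire argument consists of the observation $\widetilde{\psi}_{*}(v)=-\overline{\nabla}_{v}\eta=\psi_{*}(A_{\eta}(v))$ stated just before the lemma, from which the result is declared to follow. You have merely filled in the routine pointwise linear algebra (kernel equality via injectivity of $\psi_{*}$, self-adjointness of $A_{\eta}$, and positive definiteness of $\langle A_{\eta}\,\cdot\,,A_{\eta}\,\cdot\,\rangle$), all of which is what the paper implicitly leaves to the reader.
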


If $\eta$ is assumed to be non-degenerate, we will write
$\widetilde{\psi}^{*}\langle
\,\,\,,\,\,\,\rangle=\mathrm{III}_{\eta}$ and $\widetilde{\eta}$
will denote the lightlike normal vector field coresponding to $\widetilde{\psi}$. Observe
that $\widetilde{\eta}=-\psi$, in particular
$\psi=\widetilde{\widetilde{\psi}}$. The Weingarten operators and
the second fundamental form for $\widetilde{\psi}$ will
represented by $\widetilde{A}$ and $\widetilde{\mathrm{II}}$,
respectively. Note that, in general,
$\widetilde{A}_{\widetilde{\eta}}=\widetilde{A}_{-\psi}\neq I$.
The spacelike surface $\widetilde{\psi}:M^{2}\to \Lambda$ is
called the conjugated surface to $\psi$.

\begin{proposition}\label{des30}
Let $\psi : M^2 \to \L^{\,4}$ be a spacelike immersion which
factors through the lightcone $\Lambda$. Assume $\eta$ is
non-degenerate. Then we have,
\begin{enumerate}
\item[{\rm 1.}] $\widetilde{A}_{\widetilde{\eta}}=A_{\eta}^{-1}$.
\item[{\rm 2.}] $\widetilde{\mathrm{II}}_{\widetilde{\eta}}=\mathrm{II}_{\eta}$.
\end{enumerate}
\end{proposition}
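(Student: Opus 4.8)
The plan is to derive part 1 directly from the Weingarten formula for the conjugate immersion $\widetilde{\psi}$, and then to obtain part 2 as a short algebraic consequence of part 1 together with Lemma \ref{IIII}. The starting observation is the identity $\widetilde{\psi}_{*}(v)=\psi_{*}(A_{\eta}v)$ recorded just before Lemma \ref{IIII}: since $\eta$ is non-degenerate, $A_{\eta}$ is invertible, so the two immersions have the same tangent planes in $\R^{4}$ and hence the same normal planes, spanned by the lightlike fields $\{\psi,\eta\}$. In particular $\widetilde{\eta}=-\psi$ is the normalized lightlike normal of $\widetilde{\psi}$, as $\langle \widetilde{\psi},\widetilde{\eta}\rangle=\langle -\eta,-\psi\rangle=\langle \eta,\psi\rangle=1$.

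For part 1 I would compute $\overline{\nabla}_{v}\widetilde{\eta}$ in two ways for $v\in T_{p}M^{2}$. On the one hand, since $\widetilde{\eta}=-\psi$ and $\overline{\nabla}_{v}\psi=\psi_{*}v$ (which follows from $A_{\psi}=-I$ together with the parallelism of $\psi$), we get $\overline{\nabla}_{v}\widetilde{\eta}=-\psi_{*}v$. On the other hand, the Weingarten formula applied to the immersion $\widetilde{\psi}$ with normal field $\widetilde{\eta}$ gives $\overline{\nabla}_{v}\widetilde{\eta}=-\widetilde{\psi}_{*}(\widetilde{A}_{\widetilde{\eta}}v)+\widetilde{\nabla}^{\perp}_{v}\widetilde{\eta}$. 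Because $\widetilde{\psi}$ is again a spacelike surface factoring through $\Lambda$, its lightlike normals $\widetilde{\psi}$ and $\widetilde{\eta}$ are parallel for its own normal connection, so $\widetilde{\nabla}^{\perp}_{v}\widetilde{\eta}=0$. Substituting $\widetilde{\psi}_{*}(w)=\psi_{*}(A_{\eta}w)$ and equating the two expressions yields $-\psi_{*}v=-\psi_{*}\big(A_{\eta}\widetilde{A}_{\widetilde{\eta}}v\big)$; since $\psi_{*}$ is injective this forces $A_{\eta}\widetilde{A}_{\widetilde{\eta}}=I$, that is $\widetilde{A}_{\widetilde{\eta}}=A_{\eta}^{-1}$.

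For part 2 I would simply unwind the definitions. By Lemma \ref{IIII} the metric induced by $\widetilde{\psi}$ is $\mathrm{III}_{\eta}(u,v)=\langle A_{\eta}^{2}u,v\rangle$, so that, using self-adjointness of $A_{\eta}$ and part 1,
\[
\widetilde{\mathrm{II}}_{\widetilde{\eta}}(X,Y)=-\mathrm{III}_{\eta}\big(\widetilde{A}_{\widetilde{\eta}}X,Y\big)=-\langle A_{\eta}^{2}A_{\eta}^{-1}X,Y\rangle=-\langle A_{\eta}X,Y\rangle=\mathrm{II}_{\eta}(X,Y).
\]

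The only delicate point is the vanishing of the normal-connection term $\widetilde{\nabla}^{\perp}_{v}\widetilde{\eta}$; I would justify it by invoking the general parallelism of the lightlike normals for any surface through $\Lambda$, applied now to the conjugate surface $\widetilde{\psi}$, after the preliminary check that $\langle \widetilde{\psi},\widetilde{\eta}\rangle=1$ so that this parallelism statement genuinely applies to $\widetilde{\psi}$ and $\widetilde{\eta}$. Everything else is bookkeeping with the shared tangent and normal planes of $\psi$ and $\widetilde{\psi}$ and the injectivity of $\psi_{*}$.
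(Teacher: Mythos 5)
Your proof is correct and follows essentially the same route as the paper: computing $\overline{\nabla}_{v}\widetilde{\eta}$ both via the Weingarten formula for $\widetilde{\psi}$ (using $\widetilde{\psi}_{*}=\psi_{*}\circ A_{\eta}$) and via $\widetilde{\eta}=-\psi$, then deducing part 2 by unwinding $\widetilde{\mathrm{II}}_{\widetilde{\eta}}$ through $\mathrm{III}_{\eta}$ and part 1. The extra details you supply (the check $\langle\widetilde{\psi},\widetilde{\eta}\rangle=1$, the vanishing of $\widetilde{\nabla}^{\perp}_{v}\widetilde{\eta}$, injectivity of $\psi_{*}$) are points the paper leaves implicit, and your justification of each is sound.
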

\begin{proof}
The Weingarten equation for $\widetilde{\psi}$ can be written as
follows,
$$\overline{\nabla}_{v}\widetilde{\eta}=-\widetilde{\psi}_{*}\Big(\widetilde{A}_{\widetilde{\eta}}(v)\Big)=-\psi_{*}\Big(A_{\eta}(\widetilde{A}_{\widetilde{\eta}}(v))\Big),$$
for every $v\in T_{p}M^2$, $p\in M^2$. On the other hand,
$\overline{\nabla}_{v}\widetilde{\eta}=-\psi_{*}(v)$ and we deduce
the first assertion. Now, for $u,v\in T_{p}M^2$,
$$
\widetilde{\mathrm{II}}_{\widetilde{\eta}}(u,v)=-\langle A^{2}_{\eta}(\widetilde{A}_{\widetilde{\eta}}(u)),v\rangle=\mathrm{II}_{\eta}(u,v).
$$
\end{proof}

\begin{corollary}
Let $\psi : M^2 \to \L^{\,4}$ be a spacelike immersion which
factors through the lightcone $\Lambda$. If $\eta$ is
non-degenerate, then,
\begin{equation}\label{III}
K^{\mathrm{III}_{\eta}}=\frac{K}{\mathfrak{d}}.
\end{equation}
\end{corollary}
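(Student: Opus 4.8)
The plan is to avoid computing the Gauss curvature of $\mathrm{III}_{\eta}$ directly from its metric coefficients, and instead to exploit the observation that $\mathrm{III}_{\eta}$ is itself the induced metric of the conjugate spacelike immersion $\widetilde{\psi}=-\eta$ (Lemma \ref{IIII}). Because $\eta$ is non-degenerate, $\widetilde{\psi}$ is a genuine spacelike immersion that again factors through $\Lambda$, so every identity valid for spacelike surfaces through the lightcone applies verbatim to $\widetilde{\psi}$. In particular, rewriting the intrinsic curvature formula (\ref{traza}) for the conjugate surface, whose distinguished lightlike normal is $\widetilde{\eta}=-\psi$, yields
\[
K^{\mathrm{III}_{\eta}}=-\mathrm{tr}\bigl(\widetilde{A}_{\widetilde{\eta}}\bigr),
\]
where $K^{\mathrm{III}_{\eta}}$ is the Gauss curvature of the metric induced by $\widetilde{\psi}$, namely $\mathrm{III}_{\eta}$. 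Here the trace is the ordinary, metric-independent trace of the endomorphism $\widetilde{A}_{\widetilde{\eta}}$, so no ambiguity about which metric is used arises.

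Next I would invoke Proposition \ref{des30}, part 1, which identifies $\widetilde{A}_{\widetilde{\eta}}=A_{\eta}^{-1}$; this inverse is well defined precisely because $\mathfrak{d}=\mathrm{det}(A_{\eta})\neq 0$. Substituting gives $K^{\mathrm{III}_{\eta}}=-\mathrm{tr}(A_{\eta}^{-1})$. Finally I would apply the elementary linear-algebra identity, special to dimension two, that for an invertible endomorphism $A$ of a $2$-dimensional space one has $\mathrm{tr}(A^{-1})=\mathrm{tr}(A)/\mathrm{det}(A)$. Applied to $A=A_{\eta}$, together with $\mathrm{tr}(A_{\eta})=-K$ from (\ref{traza}) and $\mathrm{det}(A_{\eta})=\mathfrak{d}$, this produces $\mathrm{tr}(A_{\eta}^{-1})=-K/\mathfrak{d}$ and hence $K^{\mathrm{III}_{\eta}}=K/\mathfrak{d}$, as claimed.

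The argument contains no genuinely hard step; its only delicate point is the legitimacy of transferring (\ref{traza}) to the conjugate surface, which is guaranteed because Lemma \ref{IIII} certifies that $\widetilde{\psi}$ is a bona fide spacelike immersion through $\Lambda$ with induced metric $\mathrm{III}_{\eta}$, and because the reciprocal-trace relation $\mathrm{tr}(A^{-1})=\mathrm{tr}(A)/\mathrm{det}(A)$ holds only in the $2\times 2$ case. An alternative but less economical route would diagonalize $A_{\eta}$ with eigenvalues $\lambda_{1},\lambda_{2}$, so that $\mathrm{III}_{\eta}$ acquires principal values $\lambda_{1}^{2},\lambda_{2}^{2}$ relative to the induced metric, and then compute $K^{\mathrm{III}_{\eta}}$ from this change of metric directly; this reproduces the same combination $(\lambda_{1}+\lambda_{2})/(\lambda_{1}\lambda_{2})$ with considerably more effort, so I would prefer the conjugate-surface approach.
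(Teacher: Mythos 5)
Your proposal is correct and is essentially the paper's own proof: the authors' argument is exactly to apply formula (\ref{traza}) to the conjugate immersion $\widetilde{\psi}=-\eta$ (whose induced metric is $\mathrm{III}_{\eta}$ by Lemma \ref{IIII}) and then use Proposition \ref{des30} together with the two-dimensional identity $\mathrm{tr}(A_{\eta}^{-1})=\mathrm{tr}(A_{\eta})/\mathrm{det}(A_{\eta})$. You have merely written out in full the steps the paper compresses into one sentence, including the correct caveat that the reciprocal-trace identity is special to dimension two.
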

\begin{proof}
This easily follows from previous result taking into account
(\ref{traza}) for $\widetilde{\psi}$.
\end{proof}

Now, Lemma \ref{des2} and Proposition \ref{des30} give.
\begin{corollary}\label{conj}
Let $\psi : M^2 \longrightarrow \L^{\,4}$ be a spacelike immersion
which factors through the lightcone $\Lambda$. Assume $\eta$ is
non-degenerate. Then $\psi$ is totally umbilical if and only if
$\widetilde{\psi}$ is totally umbilical.
\end{corollary}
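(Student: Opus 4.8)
The plan is to reduce both umbilicity conditions to a single algebraic identity between the scalar invariants $K$ and $\mathfrak{d}$, using Lemma \ref{des2} to translate ``totally umbilical'' into an equality and Proposition \ref{des30} to relate the invariants of $\psi$ and $\widetilde{\psi}$. Since a spacelike surface through $\Lambda$ always has $A_\psi=-I$, being totally umbilical is equivalent to $A_\eta$ being a multiple of the identity, and by the equality case of Lemma \ref{des2} this is in turn equivalent to $K^2=4\mathfrak{d}$. The crucial observation is that Lemma \ref{des2} applies verbatim to \emph{any} spacelike immersion factoring through $\Lambda$, and in particular to the conjugate immersion $\widetilde{\psi}$ (which is indeed such an immersion by Lemma \ref{IIII}), whose own Gauss and Gauss--Kronecker curvatures I denote by $\widetilde{K}$ and $\widetilde{\mathfrak{d}}$. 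Thus $\widetilde{\psi}$ is totally umbilical if and only if $\widetilde{K}^2=4\widetilde{\mathfrak{d}}$, and it suffices to show that this last equality is equivalent to $K^2=4\mathfrak{d}$.

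First I would compute $\widetilde{K}$ and $\widetilde{\mathfrak{d}}$ in terms of $K$ and $\mathfrak{d}$. By Proposition \ref{des30}, $\widetilde{A}_{\widetilde{\eta}}=A_\eta^{-1}$, which is well defined precisely because $\eta$ is non-degenerate. Applying the trace relation (\ref{traza}) to $\widetilde{\psi}$ gives $\widetilde{K}=-\mathrm{tr}(A_\eta^{-1})$, while $\widetilde{\mathfrak{d}}=\det(A_\eta^{-1})=1/\mathfrak{d}$. For the $2\times2$ operator $A_\eta$ the Cayley--Hamilton identity yields $\mathrm{tr}(A_\eta^{-1})=\mathrm{tr}(A_\eta)/\det(A_\eta)=-K/\mathfrak{d}$, so that $\widetilde{K}=K/\mathfrak{d}$ (this also recovers formula (\ref{III})).

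Substituting these into the umbilicity criterion for $\widetilde{\psi}$, the equality $\widetilde{K}^2=4\widetilde{\mathfrak{d}}$ reads $K^2/\mathfrak{d}^2=4/\mathfrak{d}$; multiplying by $\mathfrak{d}^2\neq0$ this is exactly $K^2=4\mathfrak{d}$, the umbilicity criterion for $\psi$. Hence $\psi$ is totally umbilical if and only if $\widetilde{\psi}$ is, as claimed. I do not expect a genuine obstacle here: the argument is purely algebraic once Lemma \ref{des2} and Proposition \ref{des30} are in hand. The only points requiring care are the use of non-degeneracy, which is needed both to form $A_\eta^{-1}$ and to cancel the factor $\mathfrak{d}$, and the elementary verification that the equality case of $4\mathfrak{d}\le K^2$ is invariant under the inversion $A_\eta\mapsto A_\eta^{-1}$ — a reflection of the fact that $A_\eta$ and $A_\eta^{-1}$ share the same eigendirections and are simultaneously a multiple of the identity. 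A slightly shorter variant would bypass Lemma \ref{des2} entirely, simply noting that $A_\eta=\lambda I$ if and only if $A_\eta^{-1}=\lambda^{-1}I$, but citing Lemma \ref{des2} as in the statement keeps the scalar characterizations of umbilicity uniform throughout.
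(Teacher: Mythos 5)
Your proof is correct and follows essentially the same route as the paper, which derives Corollary \ref{conj} precisely from the equality case of Lemma \ref{des2} together with the relation $\widetilde{A}_{\widetilde{\eta}}=A_{\eta}^{-1}$ of Proposition \ref{des30}; your computation $\widetilde{K}=K/\mathfrak{d}$, $\widetilde{\mathfrak{d}}=1/\mathfrak{d}$ just makes explicit the translation the paper leaves to the reader. The shorter variant you mention at the end ($A_{\eta}=\lambda I$ iff $A_{\eta}^{-1}=\lambda^{-1}I$) is equally valid and is arguably the cleanest formulation of the same idea.
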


\begin{remark}
{\rm An interesting question on spacelike surfaces which factor
through the lightcone is the  behavior under the effect of
expansions. That is, let $\psi : M^2 \longrightarrow \L^{\,4}$ be
a spacelike immersion which factors through the lightcone
$\Lambda$. For every $\sigma\in C^{\infty}(M^2)$, consider the
inmersion $\psi_{\sigma}:=e^{\sigma} \psi$. Clearly,
$\psi_{\sigma}$ factors through the lightcone and
$g_{\sigma}=\psi_{\sigma}^{*}\langle\,\,\,,\,\,\,\rangle=e^{2\sigma}\langle\,\,\,,\,\,\,\rangle$,
where as usual we have written
$\psi^{*}\langle\,\,\,,\,\,\,\rangle=\langle\,\,\,,\,\,\,\rangle$.
Therefore, $\psi_{\sigma}$ is spacelike and the Gauss curvature
$K_{\sigma}$ of $g_{\sigma}$ satisfies,
\begin{equation}\label{conf}
K_{\sigma}=\frac{K-\triangle \sigma}{e^{2\sigma}}.
\end{equation}

Let $\eta_{\sigma}=e^{-\sigma}\eta$ be the lightlike normal vector
field such that $\langle \psi_{\sigma},\eta_{\sigma}\rangle=1$. A
straightforward computation from (\ref{formulo}) gives that for
every $X\in \mathfrak{X}(M^2)$,
$$
A^{\sigma}_{\eta_{\sigma}}(X)=\frac{1}{e^{2\sigma}}\Big(A_{\eta}(X)+\nabla_{X}\nabla\sigma+\frac{\|\nabla \sigma\|^2}{2}X-X\sigma\cdot \nabla \sigma\Big),
$$
where $A^{\sigma}_{\eta_{\sigma}}$ denotes the Weingarten operator
corresponding to $\eta_{\sigma}$ with respect to the spacelike
inmersion $\psi_{\sigma}$. Observe that (\ref{conf}) also achieves
from $K_{\sigma}=-\mathrm{tr}(A^{\sigma}_{\eta_{\sigma}})$. Assume
now $A^{\sigma}_{\eta_{\sigma}}$ is non-degenerate, then
$$
\mathrm{II}^{\sigma}_{\eta_{\sigma}}=\mathrm{II}_{\eta}+d\sigma\otimes d\sigma-\frac{\|\nabla \sigma\|^2}{2}\langle\,\,\,,\,\,\,\rangle-\mathrm{Hess}^{\sigma}.
$$
In particular, if $\sigma$ is a constant $
\mathrm{II}^{\sigma}_{\eta_{\sigma}}=\mathrm{II}_{\eta}$. The
converse holds in the compact case. In fact, from
$\mathrm{II}^{\sigma}_{\eta_{\sigma}}=\mathrm{II}_{\eta}$ we get
$\triangle \sigma=0$.}
\end{remark}

\section{The Gauss curvature of $\mathrm{II}_{\eta}$}
Assume now $\mathrm{II}_{\eta}$ is a Riemannian metric on $M^2$.
This section is devoted to obtain a formula which relates the
Gauss curvature $K$ of the induced Riemannian metric $\langle\,\,
, \,\,\rangle$ and the Gauss curvature $K^{\eta}$ of the metric
$\mathrm{II}_{\eta}$.

Let $\nabla^{\mathrm{II}_{\eta}}$ be the Levi-Civita connection of the metric
$\mathrm{II}_{\eta}$. The difference tensor $L$ between the
Levi-Civita connections $\nabla^{\mathrm{II}_{\eta}}$ and $\nabla$ is the symmetric tensor
given by,
$$
L(X,Y)=\nabla^{\mathrm{II}_{\eta}}_{X}Y-\nabla_{X}Y,
$$
for all $X,Y\in \mathfrak{X}(M^{2})$. The Koszul formula \cite[p.
61]{One83} for $\mathrm{II}_{\eta}$, the Codazzi equation
(\ref{Cod}) and $\nabla^{\perp}\eta=0$ show,
\begin{equation}\label{L}
L(X,Y)=\frac{1}{2}A_{\eta}^{-1}\Big[(\nabla_{X}A_{\eta})Y\Big].
\end{equation}

The Riemannian curvature tensor $R^{\eta}$ of $\mathrm{II}_{\eta}$
is obtained as, $$R^{\eta}=R\,+\,Q_{1}\,+\,Q_{2},$$ where $R$
is the Riemannian curvature tensor of the induced metric and
\[
Q_{1}(X,Y)Z=(D_{X}L)(Y,Z)-(D_{Y}L)(X,Z),
\]
\[
Q_{2}(X,Y)Z=L(Y,L(X,Z))- L(X,L(Y,Z)),
\]
for all $X,Y,Z\in \mathfrak{X}(M^{2})$. Therefore the Gauss
curvature $K^{\eta}$ satisfies,
\begin{equation}\label{main}
2K^{\eta}=
\mathrm{tr}_{\mathrm{II}_{\eta}}(\mathrm{Ric})+\mathrm{tr}_{\mathrm{II}_{\eta}}(\widehat{Q_{1}})
+\mathrm{tr}_{\mathrm{II}_{\eta}}(\widehat{Q_{2}}),
\end{equation}
where $\widehat{Q_{i}}(X,Y)=\mathrm{tr}\{Z \mapsto
Q_{i}(Z,X)Y\}$ and $\mathrm{tr}_{\mathrm{II}_{\eta}}$
stands for the trace of the $(1,1)$-tensor
$\overline{\mathcal{T}}$ defined by
$\mathrm{II}_{\eta}(\overline{\mathcal{T}}(X),Y)=\mathcal{T}(X,Y)$.

\begin{theorem}\label{nuevo}
Let $\psi : M^2 \longrightarrow \L^{\,4}$ be a spacelike immersion
which factors through the lightcone $\Lambda$ such that
$\mathrm{II_{\eta}}$ is a Riemannian metric. Then,
\begin{equation}\label{des5}
2K^{\eta}= \frac{K^{2}}{\mathfrak{d}}+\mathrm{II}_{\eta}(L,L)-\frac{1}{4\mathfrak{d}^{2}}\,\mathrm{II}_{\eta}(\nabla^{\mathrm{II}_{\eta}}\mathfrak{d},\nabla^{\mathrm{II}_{\eta}}\mathfrak{d}).
\end{equation}
\end{theorem}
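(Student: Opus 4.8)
The plan is to compute, one by one, the three traces on the right-hand side of (\ref{main}), working pointwise in an orthonormal basis $\{e_1,e_2\}$ of the induced metric that diagonalises $A_\eta$, with $A_\eta e_i=\lambda_i e_i$. Because $\mathrm{II}_\eta=-\langle A_\eta\,\cdot\,,\,\cdot\,\rangle$ is Riemannian, $A_\eta$ is negative definite and $\lambda_i<0$; then for any symmetric $(0,2)$-tensor $\mathcal T$ the relation $\mathrm{II}_\eta(\overline{\mathcal T}X,Y)=\mathcal T(X,Y)$ gives the convenient expression $\mathrm{tr}_{\mathrm{II}_\eta}(\mathcal T)=\sum_{i}\mathcal T(e_i,e_i)/(-\lambda_i)$. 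Throughout I would use $\mathfrak d=\lambda_1\lambda_2$ and $\lambda_1+\lambda_2=\mathrm{tr}(A_\eta)=-K$ from (\ref{traza}).

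The Ricci term is purely algebraic. In dimension two $\mathrm{Ric}=K\langle\,,\,\rangle$, so its $\mathrm{II}_\eta$-associated operator is $-K A_\eta^{-1}$ and $\mathrm{tr}_{\mathrm{II}_\eta}(\mathrm{Ric})=-K\,\mathrm{tr}(A_\eta^{-1})$. Applying Cayley--Hamilton to the $2\times 2$ operator $A_\eta$, $\mathrm{tr}(A_\eta^{-1})=\mathrm{tr}(A_\eta)/\det(A_\eta)=-K/\mathfrak d$, which yields $\mathrm{tr}_{\mathrm{II}_\eta}(\mathrm{Ric})=K^{2}/\mathfrak d$, the first summand of (\ref{des5}).

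For the two $Q$-terms the key preliminary is the contracted $1$-form of $L$. From (\ref{L}), the Codazzi equation (\ref{Cod}) with $\nabla^\perp\eta=0$ (which makes $\nabla A_\eta$, hence $L$, symmetric), and Jacobi's formula $d\log\det A_\eta=\mathrm{tr}(A_\eta^{-1}\,dA_\eta)$, one finds $\omega(Y):=\mathrm{tr}\{Z\mapsto L(Y,Z)\}=\tfrac12\,\mathrm{tr}\big(A_\eta^{-1}\,\nabla_Y A_\eta\big)=\tfrac{1}{2\mathfrak d}\,d\mathfrak d(Y)$, whose $\mathrm{II}_\eta$-dual squared norm is exactly $\tfrac{1}{4\mathfrak d^{2}}\,\mathrm{II}_\eta(\nabla^{\mathrm{II}_\eta}\mathfrak d,\nabla^{\mathrm{II}_\eta}\mathfrak d)$; this is what the last term of (\ref{des5}) encodes. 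The decisive structural point is that $\widehat{Q_1}$, although a priori built from $\nabla L\sim\nabla^{2}A_\eta$, carries no second-order content: writing $\widehat{Q_1}(X,Y)=\sum_k\langle(\nabla_{e_k}L)(X,Y)-(\nabla_X L)(e_k,Y),e_k\rangle$ and evaluating at a point in normal coordinates, the two second-derivative contributions coincide because ordinary partial derivatives commute, so they cancel and only products quadratic in $\nabla A_\eta$ survive. Thus $\widehat{Q_1}$ and $\widehat{Q_2}$ are both quadratic in $L$, and I would expand $\mathrm{tr}_{\mathrm{II}_\eta}(\widehat{Q_1})+\mathrm{tr}_{\mathrm{II}_\eta}(\widehat{Q_2})$ in the eigenframe to identify it with $\mathrm{II}_\eta(L,L)-\tfrac{1}{4\mathfrak d^{2}}\,\mathrm{II}_\eta(\nabla^{\mathrm{II}_\eta}\mathfrak d,\nabla^{\mathrm{II}_\eta}\mathfrak d)$.

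The main obstacle is exactly this last bookkeeping. One must organise the quadratic-in-$L$ terms so that the ``full'' contraction reassembles into $\mathrm{II}_\eta(L,L)$ while the ``trace'' pieces coming from $\omega$ combine --- with the correct sign and coefficient --- into the negative dual-norm term; keeping straight which slot is raised by $\langle\,,\,\rangle$ and which by $\mathrm{II}_\eta$ (equivalently, the placement of the factors $A_\eta^{-1}$) is where sign and coefficient errors are most likely. Apart from this, and the one genuinely delicate verification that the second-order part of $\widehat{Q_1}$ cancels cleanly, the remaining computation is routine two-dimensional linear algebra in the eigenframe.
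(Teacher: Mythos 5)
Your blueprint is the same as the paper's---evaluate the three traces in (\ref{main}) pointwise in an eigenframe of $A_{\eta}$---and two of your three pieces are correct: the Ricci trace via Cayley--Hamilton, and the identification of the contracted one-form of $L$ with $\tfrac{1}{2\mathfrak{d}}\,d\mathfrak{d}$ via Jacobi's formula (this is the paper's identity (\ref{calor})). The genuine gap is your treatment of $\widehat{Q_{1}}$. Your claim that, in normal coordinates, the two second-derivative contributions cancel ``so only products quadratic in $\nabla A_{\eta}$ survive'' is false: the genuine second derivatives of $A_{\eta}$ do cancel, but $\nabla L$ also contains derivatives of Christoffel symbols, and what the antisymmetrization actually leaves, by the Ricci identity, is the curvature term $\tfrac{1}{2}A_{\eta}^{-1}\bigl[R(Z,X),A_{\eta}\bigr]Y$ \emph{in addition to} the terms quadratic in $\nabla A_{\eta}$. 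This term is zeroth order in $\nabla A_{\eta}$ and is pointwise nonzero at every non-umbilic point (recall $K>0$ here), so your proposed formula for $\widehat{Q_{1}}$ is wrong as stated. The theorem is saved only by a two-dimensional accident: the curvature contribution to $\widehat{Q_{1}}(X,Y)$ works out to $\tfrac{K}{2}\,\mathrm{tr}(A_{\eta}^{-1})\langle A_{\eta}X,Y\rangle-K\langle X,Y\rangle$, and the $\mathrm{II}_{\eta}$-trace of this tensor vanishes identically. That verification appears nowhere in your plan, and without it the bookkeeping you defer cannot close.

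It is worth seeing how the paper sidesteps this entirely. In (\ref{main}) the connection $D$ must be read as $\nabla^{\mathrm{II}_{\eta}}$: only then does $R^{\eta}=R+Q_{1}+Q_{2}$ hold with the stated sign of $Q_{2}$ (with $D=\nabla$, as in your expression for $\widehat{Q_{1}}$, the sign of $Q_{2}$ flips and $\mathrm{tr}_{\mathrm{II}_{\eta}}(\widehat{Q_{1}})$ is no longer zero---a consistency point your plan also glosses over, though the sum of the two traces is decomposition-independent). Since $D\,\mathrm{II}_{\eta}=0$, one gets $\mathrm{II}_{\eta}(Q_{1}(Z,X)Y,W)=(D_{Z}C)(X,Y,W)-(D_{X}C)(Z,Y,W)$, where $C=\mathrm{II}_{\eta}(L(\cdot,\cdot),\cdot)$ is totally symmetric by (\ref{L}), the Codazzi equation (\ref{Cod}) and $\nabla^{\perp}\eta=0$; symmetry of $DC$ in its last three slots then kills $\mathrm{tr}_{\mathrm{II}_{\eta}}(\widehat{Q_{1}})$ outright, with no curvature terms ever appearing. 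The same total symmetry of $C$ (stronger than the symmetry of $L$ that you invoke) is what collapses $\mathrm{tr}_{\mathrm{II}_{\eta}}(\widehat{Q_{2}})$ to $\mathrm{II}_{\eta}(L,L)-\mathrm{II}_{\eta}(\mathrm{tr}_{\mathrm{II}_{\eta}}(L),\mathrm{tr}_{\mathrm{II}_{\eta}}(L))$, i.e.\ precisely the step you set aside as routine bookkeeping is the paper's actual argument. In short: right skeleton and two correct pieces, but the decisive step rests on a false cancellation claim plus a computation that is deferred rather than done.
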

\begin{proof}
Fix $p\in M^{2}$ and let $\{e_{1},e_{2}\}$ be an orthonormal basis
of $T_{p}M^{2}$ for $\langle \,\, ,\,\,\rangle$ which satisfies
$A_{\mathcal{\eta}}(e_{i})=-\lambda_{i}e_{i}$ with $\lambda_{i}>0
$ for $i=1,2$. Then $\{w_{1},w_{2}\}$, where
$w_{i}=(\lambda_{i})^{-1/2}e_{i}$, is an orthonormal basis for
$\mathrm{II}_\mathcal{\eta}$. Taking into account (\ref{traza}), a
direct computation shows that,
$$
\mathrm{tr}_{\mathrm{II}_{\eta}}(\mathrm{Ric})=\frac{K^2}{\mathfrak{d}}.
$$

From (\ref{L}), we obtain that $\mathrm{II}_{\eta}(L(X,Y),Z)$ is
symmetric in all three variables and therefore,
$$
\mathrm{II}_{\eta}(Q_{1}(X,Y)Y,X)=\mathrm{II}_{\eta}(Q_{1}(X,Y)X,Y).
$$
Now, it is easily deduced that,
$$
\mathrm{tr}_{\mathrm{II}_{\eta}}(\widehat{Q_{1}})=0.
$$

Taking into account (\ref{L}), we obtain,
$$
\mathrm{II}_{\eta}(L(X,Y),Z)=\mathrm{II}_{\eta}(L(X,Z),Y),
$$
for every $X,Y,Z \in \mathfrak{X}(M^{2})$.
A straightforward computation shows,
$$
\mathrm{tr}_{\mathrm{II}_{\eta}}(\widehat{Q_
{2}})=2\Big(\mathrm{II}_{\eta}(L(w_{1},w_{2}),L(w_{1},w_{2}))-\mathrm{II}_{\eta}(L(w_{1},w_{1}),L(w_{2},w_{2}))\Big)
$$
$$=\mathrm{II}_{\eta}(L,L)-\mathrm{II}_{\eta}(\mathrm{tr}_{\mathrm{II}_{\eta}}(L),\mathrm{tr}_{\mathrm{II}_{\eta}}(L)),$$
where $$
\mathrm{II}_{\eta}(L,L)=\sum_{i,j}\mathrm{II}_{\eta}\big(L(w_{i},w_{j}),L(w_{i},w_{j})\big),
$$
and
$$\mathrm{tr}_{\mathrm{II}_{\eta}}(L)=-L(w_{1},w_{1})-L(w_{2},w_{2}),$$
denotes the vector field obtained from the
$\mathrm{II}_{\eta}$-contraction of $L$.

We end the proof with an explicit expression of
$\mathrm{tr}_{\mathrm{II}_{\eta}}(L)$. Let $\{E_{1},E_{2}\}$ be a
local orthonormal frame for $\langle \,\, ,\,\,\rangle$ which
satisfies $A_{\mathcal{\eta}}(E_{i})=-f_{i}E_{i}$ for smooth
functions $f_{i}>0$, $i=1,2$ (see comment in \cite[p. 1815]{AAR}).
Then, $\{W_{1},W_{2}\}$, where $W_{i}=(f_{i})^{-1/2}E_{i}$, is a
local orthonormal frame for $\mathrm{II}_\mathcal{\eta}$. Recall
that $\nabla^{\perp}\eta=0$. Now a direct computation shows that,
$$
\langle (\nabla_{X}A_{\eta})E_{i},E_{i}\rangle=X(f_{i}),
$$
for any $X\in \mathfrak{X}(M^2)$,
and the Codazzi equation implies,
\begin{equation}\label{33}
X(f_{i})=\langle (\nabla_{E_{i}}A_{\eta})E_{i},X\rangle.
\end{equation}
Finally, from (\ref{33}) and (\ref{L}) we obtain,
$$
X(\log \mathfrak{d})=\langle (\nabla_{W_{1}}A_{\eta})W_{1},X\rangle+\langle (\nabla_{W_{2}}A_{\eta})W_{2},X\rangle
$$
$$
=-\mathrm{II}_{\eta}\Big(A_{\eta}^{-1}\Big(\sum_{j=1}^{2} (\nabla_{W_{j}}A_{\eta})W_{j}\Big),X\Big)=2\mathrm{II}_{\eta}(\mathrm{tr}_{\mathrm{II}_{\eta}}(L),X).
$$
Therefore,
\begin{equation}\label{calor}
\mathrm{tr}_{\mathrm{II}_{\eta}}(L)=\frac{\nabla^{\mathrm{II}_{\eta}}\mathfrak{d}}{2\mathfrak{d}},
\end{equation}
which completes the proof.
\end{proof}

\begin{remark}{\rm An alternative proof of this formula
can be achieved, using a local computation, from \cite[Exercise
I.18]{Eisenhart}; compare with \cite[Proppsition 3.4 ]{AR}. On the
other hand, a key fact in order to get formula (\ref{des5}) has
been $\nabla^{\perp}\eta=0$. For an arbitrary spacelike surface in
$\L^4$, every lightlike normal vector field $\eta$ must be
recurrent. That is, we have $\nabla^{\perp}\eta=\omega \otimes
\eta$ where $\omega$ is a one form on $M^2$. If in addition,
$\eta$ is assumed to be non-degenerate, a formula relating the
Gauss curvatures $K$ and $K^{\eta}$ can be also obtained as a wide
extension of (\ref{des5}).}
\end{remark}

\begin{proposition}\label{des7}
Let $\psi : M^2 \to \L^{\,4}$ be a spacelike immersion which
factors through the lightcone $\Lambda$ and assume
$\mathrm{II_{\eta}}$ is a Riemannian metric. Then, $M^{2}$ is
totally umbilical in $\L^{4}$ if and only if the Gauss-Kronecker
curvature with respect to $\eta$ is a constant and $K^{\eta}=2$.
\end{proposition}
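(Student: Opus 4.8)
The plan is to use the master formula (\ref{des5}) from Theorem \ref{nuevo} as the organizing tool, since it already packages together the three relevant curvatures $K$, $K^{\eta}$ and $\mathfrak{d}$. The forward direction is immediate: if $M^2$ is totally umbilical, then $A_{\eta}=-\lambda I$ for some positive function $\lambda$, so $\mathfrak{d}=\lambda^2$ and $K=-\mathrm{tr}(A_{\eta})=2\lambda$, whence $K^2=4\lambda^2=4\mathfrak{d}$. Moreover, the paper's explicit description of totally umbilical round spheres (from the introduction, with $A_{\eta}=-(1/2r^2)I$) gives constancy of $\mathfrak{d}$ directly; alternatively, I would feed the umbilical condition into (\ref{des5}) to read off $K^{\eta}=2$, noting that $L\equiv 0$ and $\nabla^{\mathrm{II}_{\eta}}\mathfrak{d}=0$ when $A_{\eta}$ is a constant multiple of the identity. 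So the substance lies entirely in the converse.

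For the converse, I would assume $\mathfrak{d}$ is constant and $K^{\eta}=2$ and aim to show $A_{\eta}$ is a multiple of the identity at every point. With $\mathfrak{d}$ constant, we have $\nabla^{\mathrm{II}_{\eta}}\mathfrak{d}=0$, so by (\ref{calor}) the trace $\mathrm{tr}_{\mathrm{II}_{\eta}}(L)$ vanishes and the last term of (\ref{des5}) drops out. The formula then collapses to
\[
2K^{\eta}=\frac{K^2}{\mathfrak{d}}+\mathrm{II}_{\eta}(L,L).
\]
Substituting $K^{\eta}=2$ gives $\mathrm{II}_{\eta}(L,L)=4-K^2/\mathfrak{d}$. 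Now I would invoke Lemma \ref{des2}, whose first inequality reads $4\mathfrak{d}\le K^2$, i.e. $K^2/\mathfrak{d}\ge 4$ (legitimate since $\mathrm{II}_{\eta}$ Riemannian forces $\mathfrak{d}>0$, as $\mathfrak{d}=\lambda_1\lambda_2$ with both eigenvalues positive). Hence $\mathrm{II}_{\eta}(L,L)=4-K^2/\mathfrak{d}\le 0$. But $\mathrm{II}_{\eta}$ is a Riemannian metric, so $\mathrm{II}_{\eta}(L,L)\ge 0$ with equality only when $L\equiv 0$; combined with the previous line this forces $\mathrm{II}_{\eta}(L,L)=0$ and simultaneously $K^2=4\mathfrak{d}$.

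The final step is to translate $K^2=4\mathfrak{d}$ back into the geometry. By the equality case of Lemma \ref{des2}, $4\mathfrak{d}=K^2$ holds precisely when $M^2$ is totally umbilical, which is exactly the conclusion sought; the vanishing of $L$ (meaning the two Levi-Civita connections coincide) provides a consistent cross-check. The main obstacle I anticipate is not the algebra but ensuring the sign bookkeeping is airtight: one must confirm $\mathfrak{d}>0$ so that dividing by $\mathfrak{d}$ preserves inequality directions, and one must be certain that the positive-definiteness of $\mathrm{II}_{\eta}$ (guaranteed here by Proposition \ref{com}, since compactness is not assumed in this proposition but $\mathrm{II}_{\eta}$ Riemannian is) genuinely yields $\mathrm{II}_{\eta}(L,L)\ge 0$ with equality iff $L=0$. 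Once those two sign facts are pinned down, the two competing inequalities squeeze the defect term to zero and the equality case of Lemma \ref{des2} closes the argument cleanly.
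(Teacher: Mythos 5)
Your proof of the converse --- the substantive direction --- is correct and is essentially the paper's own argument: constancy of $\mathfrak{d}$ kills the gradient term in (\ref{des5}) via (\ref{calor}), positive-definiteness of $\mathrm{II}_{\eta}$ gives $\mathrm{II}_{\eta}(L,L)\geq 0$, hence $K^{2}\leq 4\,\mathfrak{d}$, and the equality case of Lemma \ref{des2} (legitimate because $\mathfrak{d}>0$) forces total umbilicity. The two ``sign facts'' you single out are exactly the right ones, and both hold; the only quibble is your parenthetical invoking Proposition \ref{com}, which is irrelevant here since positive-definiteness of $\mathrm{II}_{\eta}$ is a hypothesis of the proposition, not something to be derived.

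The gap is in the forward direction. Total umbilicity only gives $A_{\eta}=-\lambda I$ with $\lambda$ a positive \emph{function}; to conclude that $\mathfrak{d}$ is constant, that $L\equiv 0$, and that (\ref{des5}) collapses to $2K^{\eta}=K^{2}/\mathfrak{d}$, you need $\lambda$ to be constant, and neither of your two justifications delivers this. The appeal to the classification $A_{\eta}=-(1/2r^{2})I$ from the introduction is not available: that classification is for \emph{compact} totally umbilical surfaces, and Proposition \ref{des7} does not assume compactness. Your alternative route (``noting that $L\equiv 0$ \dots when $A_{\eta}$ is a constant multiple of the identity'') presupposes exactly the constancy in question, so as stated it is circular. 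The missing step is one line: since $\nabla^{\perp}\eta=0$, the Codazzi equation (\ref{Cod}) applied to $A_{\eta}=-\lambda I$ gives $(X\lambda)Y=(Y\lambda)X$ for all $X,Y$; choosing $X,Y$ linearly independent (possible since $\dim M^{2}=2$) forces $d\lambda=0$, so $\lambda$ is constant on the connected surface $M^{2}$. With that in hand, $\mathfrak{d}=\lambda^{2}$ is constant, $L\equiv 0$, and (\ref{des5}) together with (\ref{traza}) (which gives $K=2\lambda$) yields $K^{\eta}=2$ --- which is how the paper proceeds, writing $A_{\eta}=\lambda I$ with $\lambda\in\R$ and tacitly using this same Codazzi fact.
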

\begin{proof}
Assume $A_{\eta}=\lambda\, I$, where $\lambda\in \R$. Then (\ref{des5}) reduces to,
$$
2K^{\eta}=\frac{K^{2}}{\lambda^{2}}.
$$
Now from (\ref{traza}) we get that $K^{\eta}=2$. For the converse, note that (\ref{des5}) implies that
$
K^{2}\leq 4\,\mathfrak{d},
$
and Lemma \ref{des2} applies to end the proof.
\end{proof}

\begin{remark}\label{333}
{\rm From Proposition \ref{des7} and Remark \ref{curpositiva},
every complete spacelike surface $M^2$ in the lightcone $\Lambda$
with $\mathrm{II_{\eta}}$ a Riemannian metric and totally
umbilical satisfies $K=2|\lambda|>0$. As a consequence of the
classical Myers theorem, if we assume $M^2$ geodesically complete,
$M^2$ must be compact, and hence a round sphere. }
\end{remark}

\section{Main results}
For compact surfaces, Proposition \ref{des7} can be improved as
the following result states.
\begin{theorem}\label{des4}
Let $\psi : M^2 \to \L^{\,4}$ be a compact spacelike immersion
which factors through the lightcone $\Lambda$.  Assume $\eta$ is
non-degenerate. Then the following conditions are equivalent:
\begin{enumerate}
\item[{\rm 1.}] $M^2$ is a totally umbilical round sphere,
\item[{\rm 2.}] The Gauss-Kronecker curvature $\mathrm{det}(A_{\eta})$ is constant,
\item[{\rm 3.}] The Gauss curvature of the Riemannian metric $\mathrm{II}_{\eta}$ satisfies $K^{\eta}=2$.
\end{enumerate}
\end{theorem}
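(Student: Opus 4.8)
The plan is to exploit the integrated version of the master formula (\ref{des5}). Since $M^2$ is compact and $\eta$ is non-degenerate, Proposition \ref{com} guarantees that $\mathrm{II}_\eta$ is a Riemannian metric, and $M^2$ is diffeomorphic to $\S^2$, so $\chi(M^2)=2$. Writing $dA$ and $dA^\eta$ for the area elements of $\langle\,,\,\rangle$ and $\mathrm{II}_\eta$, one has $dA^\eta=\sqrt{\mathfrak d}\,dA$, and by (\ref{calor}) the last term of (\ref{des5}) is exactly $\mathrm{II}_\eta(\mathrm{tr}_{\mathrm{II}_\eta}L,\mathrm{tr}_{\mathrm{II}_\eta}L)$. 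Integrating (\ref{des5}) against $dA^\eta$ and applying Gauss--Bonnet to $(M^2,\mathrm{II}_\eta)$, I obtain the master identity
\[
8\pi=\int_{M^2}\frac{K^2}{\mathfrak d}\,dA^\eta+\int_{M^2}\Big(\mathrm{II}_\eta(L,L)-\mathrm{II}_\eta(\mathrm{tr}_{\mathrm{II}_\eta}L,\mathrm{tr}_{\mathrm{II}_\eta}L)\Big)\,dA^\eta .
\]
The implications $1\Rightarrow 2$ and $1\Rightarrow 3$ are immediate: if $M^2$ is a totally umbilical round sphere then $A_\eta=-(1/2r^2)I$, so $\mathfrak d=1/4r^4$ is constant, while $K^\eta=2$ follows from Proposition \ref{des7}. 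It remains to prove $2\Rightarrow 1$ and $3\Rightarrow 1$; for both I combine the master identity with two lower bounds for $\int_{M^2}(K^2/\mathfrak d)\,dA^\eta$: the pointwise inequality $K^2\ge 4\mathfrak d$ of Lemma \ref{des2} (with equality exactly at umbilic points), and the Cauchy--Schwarz inequality $\big(\int_{M^2}K\,dA\big)^2\le \int_{M^2}(K^2/\mathfrak d)\,dA^\eta\cdot \mathrm{area}(M^2,\mathrm{II}_\eta)$, where $\int_{M^2}K\,dA=4\pi$ again by Gauss--Bonnet.

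For $2\Rightarrow 1$, assume $\mathfrak d$ is constant. Then $\mathrm{tr}_{\mathrm{II}_\eta}L=\nabla^{\mathrm{II}_\eta}\mathfrak d/(2\mathfrak d)=0$, so the second integral reduces to $\int_{M^2}\mathrm{II}_\eta(L,L)\,dA^\eta\ge 0$ and the master identity gives $\int_{M^2}(K^2/\mathfrak d)\,dA^\eta\le 8\pi$. Writing $A^\eta=\mathrm{area}(M^2,\mathrm{II}_\eta)$, the bound $K^2\ge 4\mathfrak d$ yields $\int_{M^2}(K^2/\mathfrak d)\,dA^\eta\ge 4A^\eta$, whence $A^\eta\le 2\pi$, while the Cauchy--Schwarz bound yields $\int_{M^2}(K^2/\mathfrak d)\,dA^\eta\ge 16\pi^2/A^\eta$, whence $A^\eta\ge 2\pi$. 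Thus $A^\eta=2\pi$ and equality holds throughout; in particular $\int_{M^2}(K^2/\mathfrak d)\,dA^\eta=4A^\eta$ forces $K^2=4\mathfrak d$ everywhere, so $M^2$ is totally umbilical and, being compact, a round sphere by \cite{PR}.

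For $3\Rightarrow 1$, assume $K^\eta=2$. Gauss--Bonnet gives $2A^\eta=4\pi$, i.e. $A^\eta=2\pi$, so the Cauchy--Schwarz bound gives $\int_{M^2}(K^2/\mathfrak d)\,dA^\eta\ge 8\pi$ and the master identity then forces
\[
\int_{M^2}\Big(\mathrm{II}_\eta(L,L)-\mathrm{II}_\eta(\mathrm{tr}_{\mathrm{II}_\eta}L,\mathrm{tr}_{\mathrm{II}_\eta}L)\Big)\,dA^\eta\le 0 .
\]
On the other hand, (\ref{des5}) with $K^\eta=2$ shows that the integrand equals $4-K^2/\mathfrak d\le 0$ pointwise. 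Hence the conclusion will follow from the \emph{reverse} integral inequality $\int_{M^2}\big(\mathrm{II}_\eta(L,L)-\mathrm{II}_\eta(\mathrm{tr}_{\mathrm{II}_\eta}L,\mathrm{tr}_{\mathrm{II}_\eta}L)\big)\,dA^\eta\ge 0$: together with the pointwise sign this makes the integrand vanish identically, giving $K^2=4\mathfrak d$ everywhere and total umbilicity by Lemma \ref{des2} (hence a round sphere).

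This reverse inequality is the main obstacle. The structural facts are that $T(X,Y,Z):=\mathrm{II}_\eta(L(X,Y),Z)$ is totally symmetric (established in the proof of Theorem \ref{nuevo}) and that its $\mathrm{II}_\eta$-trace $\mathrm{tr}_{\mathrm{II}_\eta}L=\nabla^{\mathrm{II}_\eta}(\tfrac12\log\mathfrak d)$ is a gradient. Splitting $T$ into its $\mathrm{II}_\eta$-trace-free part $T^0$ and its trace part, a purely pointwise computation on the surface reduces the inequality to $\int_{M^2}\|T^0\|^2_{\mathrm{II}_\eta}\,dA^\eta\ge \tfrac14\int_{M^2}\mathrm{II}_\eta(\mathrm{tr}_{\mathrm{II}_\eta}L,\mathrm{tr}_{\mathrm{II}_\eta}L)\,dA^\eta$, since $\|T\|^2_{\mathrm{II}_\eta}=\|T^0\|^2_{\mathrm{II}_\eta}+\tfrac34\,\mathrm{II}_\eta(\mathrm{tr}_{\mathrm{II}_\eta}L,\mathrm{tr}_{\mathrm{II}_\eta}L)$ in dimension two. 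I expect to prove this by an integration-by-parts (divergence) identity for the totally symmetric tensor $T$ on the closed surface $(M^2,\mathrm{II}_\eta)\cong\S^2$, exploiting that the trace part is exact and that $\S^2$ admits no nonzero holomorphic cubic differential; a maximum-principle alternative, in the spirit of \cite{AAR} and \cite{AR}, would instead show directly that $\mathfrak d$ is constant, thereby reducing $3\Rightarrow 1$ to the already-settled case $2\Rightarrow 1$. Finally, the equivalence with the constancy of the induced Gauss curvature $K$ is \cite[Theor. 5.4]{PR}, which closes the list of equivalences.
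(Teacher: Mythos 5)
Your implications $1\Rightarrow 2,3$ are fine, and your proof of $2\Rightarrow 1$ is correct, though it takes a slightly different route from the paper: where you integrate the master identity and invoke Cauchy--Schwarz to pin down $\mathrm{area}(M^2,\mathrm{II}_\eta)=2\pi$, the paper simply chains the pointwise inequalities $2K^{\eta}\geq K^{2}/\mathfrak{d}\geq 2K/\sqrt{\mathfrak{d}}$ (valid once $\mathfrak{d}$ is constant, by Lemma \ref{des2}) and integrates against $dA_{\mathrm{II}_{\eta}}=\sqrt{\mathfrak{d}}\,dA$, getting equality from Gauss--Bonnet on both ends. Both arguments are sound.

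The genuine gap is in $3\Rightarrow 1$. You reduce it to a \emph{reverse} integral inequality, $\int_{M^2}\big(\mathrm{II}_\eta(L,L)-\mathrm{II}_\eta(\mathrm{tr}_{\mathrm{II}_\eta}L,\mathrm{tr}_{\mathrm{II}_\eta}L)\big)\,dA^\eta\geq 0$, and you do not prove it: you only state that you ``expect'' to obtain it from an integration-by-parts identity for the cubic form $T$ together with the non-existence of holomorphic cubic differentials on $\S^2$. That appeal is speculative, because nothing in the paper or in your argument provides a Codazzi-type (holomorphicity) equation for the trace-free part $T^0$ with respect to $\mathrm{II}_\eta$; without such an equation the cubic-differential argument cannot start. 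Likewise, a maximum-principle proof that $K^\eta=2$ forces $\mathfrak{d}$ to be constant is exactly the kind of statement the authors say they cannot establish (see their closing Conjecture for constant $K^\eta$); so as written your $3\Rightarrow 1$ is a plan, not a proof. The frustrating part is that the missing step is unnecessary: the ingredients you already use close this implication immediately, and this is precisely what the paper does. Lemma \ref{des2} with $K>0$, $\mathfrak{d}>0$ gives pointwise $2\sqrt{\mathfrak{d}}\leq K$, with equality exactly at umbilic points; since $dA^{\eta}=\sqrt{\mathfrak{d}}\,dA$ and $K^{\eta}=2$, Gauss--Bonnet for both metrics yields
\[
4\pi=\int_{M^2}K^{\eta}\,dA^{\eta}=\int_{M^2}2\sqrt{\mathfrak{d}}\,dA\leq\int_{M^2}K\,dA=4\pi,
\]
so $K=2\sqrt{\mathfrak{d}}$ identically, $M^2$ is totally umbilical, and hence a round sphere by \cite{PR}. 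Note that Theorem \ref{nuevo} is not needed at all for this implication; the master formula is only required for $2\Rightarrow 1$.
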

\begin{proof}
From Proposition \ref{com}, the metric $\mathrm{II}_{\eta}$ is
Riemannian. If we assume $M^2$ is totally umbilical, Proposition
\ref{des7} gives that $\mathfrak{d}$ is constant and $K^{\eta}=2$.
Assume now the Gauss-Kronecker curvature $\mathfrak{d}$ is
constant. Since $M^2$ is a topological $2$-sphere \cite{PR}, we
have $\mathfrak{d}>0$. Therefore, Lemma \ref{des2} assures that
$K\geq 2\sqrt{\mathfrak{d}}$, with equality if and only if $M^2$
is totally umbilical. Now from Theorem \ref{nuevo},
\begin{equation}\label{des3}
2K^{\eta} \geq \frac{K^{2}}{\mathfrak{d}}\geq \frac{2K}{\sqrt{\mathfrak{d}}}.
\end{equation}
The area elements corresponding to $\langle\,,\,\rangle$ and
$\mathrm{II}_{\eta}$ are related by
$dA_{\mathrm{II}_{\eta}}=\sqrt{\mathfrak{d}}\,dA_{\langle\,,\,\rangle}$.
Hence the Gauss-Bonnet formula and (\ref{des3}) imply,
$$
8\pi =\int_{M^{2}}2K^{\eta}\,dA_{\mathrm{II}_{\eta}}\geq 2\int_{M^{2}}\frac{K}{\sqrt{\mathfrak{d}}}\,dA_{\mathrm{II}_{\eta}}=2\int_{M^{2}}K\,dA_{\langle\,,\,\rangle}=8\pi.
$$
We get the equality in (\ref{des3}) and so
$K=2\sqrt{\mathfrak{d}}$ and $K^{\eta}=2$. Finally, under the
assumption $K^{\eta}=2$, Lemma \ref{des2} can be rewritten as
follows: $ K^{\eta}\sqrt{\mathfrak{d}}\leq K, $ again equality
holds if and only if $M^2$ is totally umbilical. From the
Gauss-Bonnet formula,
$$
4\pi=\int_{M^2}K^{\eta}\,dA_{\mathrm{II}_{\eta}}=\int_{M^2}K^{\eta}\sqrt{\mathfrak{d}}\,dA_{\langle\,,\,\rangle}\leq \int_{M^2}K\,dA_{\langle\,,\,\rangle}=4\pi,
$$
and $K^{\eta}\sqrt{\mathfrak{d}}= K$.
\end{proof}

\begin{remark}
{\rm A compact spacelike immersion $\psi$ which factors through
the lightcone $\Lambda$ with constant Gauss curvature must be
totally umbilical  \cite[Theorem 5.4]{PR}. From (\ref{III}) and
Corollary \ref{conj} it follows that $\psi$ is totally umbilical
if and only if $K/\mathfrak{d}$ is a constant.}
\end{remark}

We end the paper with the statement of two results which
complement Theorem \ref{des4} from points of view.

\begin{proposition}\label{des20}
Let $\psi : M^2 \to \L^{\,4}$ be a compact spacelike immersion
which factors through the lightcone $\Lambda$.  Assume $\eta$ is
non-degenerate. Then,
$$
\mathrm{area}(M^2, \mathrm{II}_{\eta})\leq 2\pi,
$$
and equality holds if and only if $M^2$ is totally umbilical.
\end{proposition}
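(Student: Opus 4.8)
The plan is to reduce everything to the Gauss--Bonnet theorem combined with the pointwise inequality of Lemma~\ref{des2}. First I would record that, since $M^2$ is compact and $\eta$ is non-degenerate, Proposition~\ref{com} guarantees that $\mathrm{II}_\eta$ is a genuine Riemannian metric; in particular $\mathfrak{d}=\det(A_\eta)>0$ at every point, and $K>0$ by Remark~\ref{curpositiva}. This positivity is what makes the square roots below legitimate and fixes all signs.

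Next I would express the $\mathrm{II}_\eta$-area in terms of the induced metric. Because $\mathrm{II}_\eta(X,Y)=-\langle A_\eta X,Y\rangle$ and $A_\eta$ has positive eigenvalues whose product is $\mathfrak{d}$, the two area elements are related by $dA_{\mathrm{II}_\eta}=\sqrt{\mathfrak{d}}\,dA_{\langle\,,\,\rangle}$, exactly the identity already used in the proof of Theorem~\ref{des4}. Hence
$$
\mathrm{area}(M^2,\mathrm{II}_\eta)=\int_{M^2}\sqrt{\mathfrak{d}}\,dA_{\langle\,,\,\rangle}.
$$

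The key step is then the left-hand inequality $4\mathfrak{d}\le K^2$ of Lemma~\ref{des2}. Since $\mathfrak{d}>0$ and $K>0$, taking square roots gives the pointwise bound $\sqrt{\mathfrak{d}}\le K/2$. Integrating and invoking Gauss--Bonnet (recall $M^2$ is topologically $\S^2$, so $\int_{M^2}K\,dA_{\langle\,,\,\rangle}=2\pi\,\chi(\S^2)=4\pi$) yields
$$
\mathrm{area}(M^2,\mathrm{II}_\eta)=\int_{M^2}\sqrt{\mathfrak{d}}\,dA_{\langle\,,\,\rangle}\le \frac12\int_{M^2}K\,dA_{\langle\,,\,\rangle}=2\pi.
$$

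Finally, for the equality case I would note that equality in the displayed estimate forces $\sqrt{\mathfrak{d}}=K/2$ everywhere (both integrands are continuous and the inequality is pointwise), i.e.\ $4\mathfrak{d}=K^2$ on all of $M^2$; by the equality clause of Lemma~\ref{des2} this happens precisely when $M^2$ is totally umbilical. I do not expect any serious obstacle: the statement is essentially a repackaging of Lemma~\ref{des2} together with Gauss--Bonnet, and the only points that deserve a word of care are the positivity of $\mathfrak{d}$ (needed both to take square roots and to write the area element with $\sqrt{\mathfrak{d}}$) and the passage from the integral equality back to the pointwise umbilicity condition.
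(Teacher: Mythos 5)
Your proposal is correct and follows essentially the same route as the paper's own (very terse) proof: the area-element relation $dA_{\mathrm{II}_{\eta}}=\sqrt{\mathfrak{d}}\,dA_{\langle\,,\,\rangle}$, the pointwise bound $2\sqrt{\mathfrak{d}}\leq K$ from Lemma~\ref{des2}, the Gauss--Bonnet theorem, and the equality clause of Lemma~\ref{des2} for the umbilical characterization. One cosmetic slip: since $\mathrm{II}_{\eta}(X,Y)=-\langle A_{\eta}X,Y\rangle$ is positive definite, the eigenvalues of $A_{\eta}$ are negative (it is $-A_{\eta}$ that is positive definite), but their product is still $\mathfrak{d}>0$, so your area-element formula and the rest of the argument are unaffected.
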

\begin{proof}
In Theorem \ref{des4} we have pointed out that
$dA_{\mathrm{II}_{\eta}}=\sqrt{\mathfrak{d}}\,dA_{\langle\,,\,\rangle}$
and  $2\sqrt{\mathfrak{d}}\leq K$. Therefore, the result follows
as a consequence of the Gauss-Bonnet formula and Lemma \ref{des2}.
\end{proof}

For a compact submanifold $M^n$ of an Euclidean space $\E^{n+p}$
there is a well-known upper bound of the first non-trivial
eigenvalue $\lambda_1$ of the Laplacian of $M^n$ called the
classical Reilly formula \cite{Re}. This upper bound depends on
the integral of the square length of the mean curvature vector
field and the $n-$dimensional area of $M^n$.  It was shown in
\cite{PR} that the same formula does not work for any compact
spacelike surface in $\L^{4}$. However,

\begin{proposition}\label{ultima}
Let $\psi : M^2 \to \L^{\,4}$ be a compact spacelike immersion
which factors through the lightcone $\Lambda$. We have the
following inequality,
\begin{equation}\label{des10}
\lambda_{1}\leq 2\,\frac{\int_{M^2}\langle \mathbf{H}, \mathbf{H}\rangle \,dA}{\mathrm{area}(M^2,\langle\,,\,\rangle)},
\end{equation}
and equality holds if and only if $M^2$ is totally umbilical.
\end{proposition}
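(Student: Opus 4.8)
The plan is to show that the right-hand side of (\ref{des10}) is in fact a topological constant, and then to recognize the asserted inequality as Hersch's classical eigenvalue bound on the sphere. First I would invoke (\ref{traza}), which gives $\langle\mathbf{H},\mathbf{H}\rangle=K$ pointwise. Since a compact spacelike surface factoring through $\Lambda$ is topologically $\S^2$ by \cite{PR}, the Gauss--Bonnet theorem yields $\int_{M^2}K\,dA=2\pi\,\chi(\S^2)=4\pi$, whence $\int_{M^2}\langle\mathbf{H},\mathbf{H}\rangle\,dA=4\pi$ independently of the immersion. Consequently the right-hand side of (\ref{des10}) equals $8\pi/\mathrm{area}(M^2,\langle\,,\,\rangle)$, and the whole statement reduces to
\[
\lambda_{1}\cdot\mathrm{area}(M^2,\langle\,,\,\rangle)\leq 8\pi .
\]

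Now $\langle\,,\,\rangle$ is simply a smooth Riemannian metric on the topological sphere $M^2$, so this is precisely Hersch's theorem: for every metric on $\S^2$ the product of the first nonzero Laplace eigenvalue and the total area is at most $8\pi$. Invoking it directly establishes (\ref{des10}). If a self-contained argument were preferred, I would reproduce Hersch's balancing device: represent $\langle\,,\,\rangle$, by uniformization, as conformal to the round metric, adjust the conformal identification by a M\"obius automorphism of $\S^2$ so that the three ambient coordinate functions have vanishing $\langle\,,\,\rangle$-mean, use these as test functions in the variational characterization of $\lambda_1$, and exploit the conformal invariance of the Dirichlet energy in dimension two. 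This argument uses only the intrinsic induced metric, which is exactly why the Lorentzian signature of $\L^4$ plays no role and the bound survives even though, as noted, the naive Reilly estimate fails here.

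It remains to identify the equality case. Hersch's theorem gives equality if and only if $\langle\,,\,\rangle$ has constant Gauss curvature, i.e. $(M^2,\langle\,,\,\rangle)$ is a round sphere. If $M^2$ is totally umbilical, then $A_{\eta}$ is a multiple of the identity and (\ref{traza}) forces $K$ to be constant, so the induced metric is round; conversely, if $K$ is constant then $M^2$ is totally umbilical by \cite[Theorem 5.4]{PR}. Hence equality in (\ref{des10}) holds exactly when $M^2$ is totally umbilical.

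The only genuine content, and the step I expect to require the most care, is the initial reduction: recognizing, via (\ref{traza}) together with Gauss--Bonnet, that $\int_{M^2}\langle\mathbf{H},\mathbf{H}\rangle\,dA$ is the topological constant $4\pi$. This is what turns the Reilly-type right-hand side into Hersch's universal bound $8\pi/\mathrm{area}$, and it is the mechanism that rescues the inequality in a setting where the general Reilly estimate does not apply.
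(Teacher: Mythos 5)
Your proposal is correct and follows essentially the same route as the paper: the authors likewise obtain (\ref{des10}) from Hersch's inequality after using (\ref{traza}) to identify $\langle \mathbf{H},\mathbf{H}\rangle$ with $K$ (so that Gauss--Bonnet makes the right-hand side the universal bound $8\pi/\mathrm{area}$), and they settle the equality case by noting that equality forces constant Gauss curvature and then invoking \cite[Theorem 5.4]{PR}. The only difference is expository: the paper cites \cite{PR} for the reduction you carry out explicitly.
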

\begin{proof}
The inequality was obtained in \cite{PR} as a consequence of the
Hersch inequality \cite{Her} taking into account (\ref{traza}).
The equality holds in (\ref{des10}) if and only if $M^2$ has
constant Gauss curvature. Now, \cite[Theorem 5.4]{PR} ends the
proof.
\end{proof}

A compact spacelike surface $M^2$ in the $3$-dimensional de Sitter
space $\S^3_1$ with non-degenerate second fundamental form is
totally umbilical if and only if the Gauss curvature
$K^{\mathrm{II}}$ of its second fundamental form is constant,
\cite{AR}. A key tool in order to get this result is the Gauss
formula $K=1-\mathrm{det}(A)$ where $K$ and $A$ are the Gauss
curvature and the Weingarten operator of $M^2$, respectively. This
relationship permits to obtain a formula which relates $K$ and
$K^{\mathrm{II}}$ and involves different ingredients of
(\ref{des5}). This makes that the technique in \cite{AR} does not
work in order to show that a compact spacelike surface in
$\Lambda$, with $K^{\eta}$ a constant, must be totally umbilical.
Note that from Theorem \ref{des4}, this assertion is in fact
equivalent to the following one: if $K^{\eta}$ is a constant for
such a spacelike surface, then $K^{\eta}=2$. At the moment the
authors have no argument to support this assertion, although we
think that it holds true.

Note that $M^2$ compact, $\eta$ non-degenerate and $K^{\eta}$
constant imply $K^{\eta}\geq 2$. To prove this fact, take a point
$q_{0}\in M^2$ where the function $\mathfrak{d}$ attains its
maximum value. From Lemma \ref{des2} and Theorem \ref{nuevo} we
deduce that, $2K^{\eta}\geq K^{2}(q_{0}) /\mathfrak{d}(q_{0})\geq
4$.

\vspace{1mm} In view of the previous discussion, we state the
following

\vspace{1mm}

\noindent {\bf Conjecture.} {\it Every compact spacelike surface
in $\L^{4}$ which factors through the lightcone $\Lambda$ such
that $\eta$ is non-degenerate and $K^{\eta}=$constant must be
totally umbilical (that is, $K^{\eta}=2$).}

\end{document}